\author{Gunnar Þór Magnússon}
\address{Institut Fourier\\
100 rue des Maths\\
38402 St. Martin d'Hères\\
France}
\email{gunnar.magnusson@ujf-grenoble.fr}
\newtheoremstyle{gtheorem}
     {3pt}        
     {3pt}        
     {\itshape}           
     {\parindent} 
     {\bfseries}   
     {.}          
     {.5em}       
     {}           
\newtheoremstyle{gdefinition}
     {3pt}        
     {3pt}        
     {}           
     {\parindent} 
     {\bfseries}   
     {.}          
     {.5em}       
     {}           
\newtheoremstyle{gremark}
     {3pt}        
     {3pt}        
     {}           
     {\parindent} 
     {\itshape}   
     {\hspace{0.5em}---}          
     {0.5em}       
     {}           
\theoremstyle{gtheorem}
\newtheorem{theo}{Theorem}[section]
\newtheorem{prop}[theo]{Proposition}
\theoremstyle{gdefinition}
\newtheorem{exam}{Example}
\theoremstyle{gremark}
\newtheorem*{rema}{Remark}
\renewenvironment{proof}{{\itshape Proof: }}{\hfill\qedsymbol\vspace{3pt}}
\newcommand{\Z}{\mathbb{Z}}
\newcommand{\C}{\mathbb{C}}
\renewcommand{\P}{\mathbb{P}}
\newcommand{\cc}[1]{\mathcal{#1}}
\newcommand{\Aut}{\mathop{\text{Aut}}}
\newcommand{\Ric}{\mathop{\text{Ric}}}
\def\fiber{M}
\def\dV{dV}
\def\Vol{\mathop{\text{Vol}}}
\begin{document}

\begin{abstract}
  If $f$ is an automorphism of a compact simply connected K\"ahler
manifold with trivial canonical bundle that fixes a K\"ahler class, then
the order of $f$ is finite. We apply this well known result to construct
compact non K\"ahler manifolds. These manifolds contradict the abundance
and Iitaka conjectures for complex manifolds.
\end{abstract}

\title[Examples of non-K\"ahler manifolds]{Automorphisms and examples of\\ compact non-K\"ahler manifolds}
\maketitle

\section*{Introduction}

Let $X$ be a compact complex manifold of dimension $n$. The generalized
version of the abundance conjecture says that if $X$ is K\"ahler then
the numerical dimension of the canonical bundle $K_X$ should be equal
to its Kodaira dimension \cite[Chapter~18]{analmeth}. 
A consequence of this conjecture is the Iitaka $C_{n,m}$ conjecture, 
which says that if $f : X \to Y$ is a holomorphic morphism of
compact K\"ahler manifolds, then $\kappa(X) \geq \kappa(Y) + \kappa(f_y)$,
where $f_y$ is a general fiber of $f$ and $\kappa$ denotes the Kodaira
dimension.

These conjectures were originally stated for projective varieties,
but their statements make sense for K\"ahler manifolds and indeed
any compact complex manifold. In this paper we produce a examples of 
compact non K\"ahler manifolds that violate both the abundance and the
Iitaka conjectures. That these conjectures fail for non-K\"ahler manifolds has been known for some time; \cite[Remark~15.3]{MR0506253} contains an example of a torus bundle over a torus that contradicts the Iitaka conjecture (it makes no mention of abundance, simply beceause it hadn't been conjectured at the time). The construction of our manifolds is in the same spirit as this previous example.

The construction is simple. A folklore result says that
if $\fiber$ is a simply connected K\"ahler manifold with trivial canonical
bundle that admits an automorphism $f$ of infinite order, then $f$ must
move every K\"ahler class on $\fiber$. Given such a manifold, we let a 
lattice in a complex vector space $V$ act on $\fiber \times V$ by translation
on $V$ and by mapping each generator of the lattice to $f$. The quotient
manifold is then a compact non K\"ahler manifold, with flat canonical bundle,
but whose Kodaira dimension is negative in some cases.

We start by detailing this construction and proving our claims on the
canonical bundle, then we point to beautiful work of Oguiso
that shows that the required K\"ahler manifolds and automorphisms exist.

\smallskip \noindent
\textbf{Acknowledgements.} A thousand thanks to Keiji Oguiso for his
interest in this work, to Valentino Tosatti for questions and remarks 
on an earlier version of this paper, and to Jean-Pierre Demailly for everything.

\section{Automorphisms and K\"ahler classes}

Let $\fiber$ be a compact simply connected K\"ahler manifold of complex 
dimension $\dim_\C \fiber = n$ with trivial canonical bundle. 
Examples of such manifolds include K3 surfaces, Calabi--Yau manifolds 
and hyperk\"ahler manifolds; see \cite{MR730926}.

Let $\omega$ be a K\"ahler metric on $\fiber$. The Ricci curvature of $\fiber$
may be defined as the curvature form of the metric that $\omega$
induces on the canonical bundle of $\fiber$. In local coordinates, one has
$\smash{2\pi \Ric \omega = - i \partial \bar \partial \log \det
  \omega_{j\overline k}}$.
Yau proved in \cite{MR480350} that if $[\omega]$
is a K\"ahler class on $\fiber$, then there exists a unique Ricci-flat
K\"ahler metric $\omega$ in the class $[\omega]$. The existence of such
metrics has great consequences for the geometry of the manifold
$\fiber$, for example:

\begin{prop}
  An automorphism $f$ of $\fiber$ fixes a K\"ahler class $[\omega]$ on $\fiber$ if and only if the order of $f$ is finite.
\end{prop}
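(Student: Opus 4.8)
The plan is to prove the two implications separately. I would dispatch ``finite order $\Rightarrow$ fixes a K\"ahler class'' first, since it is elementary: pick any K\"ahler metric $\eta$ on $\fiber$ --- one exists because $\fiber$ is K\"ahler --- and if $f$ has order $k$, set $\omega = \sum_{j=0}^{k-1} (f^j)^* \eta$. Each summand is a K\"ahler metric, so $\omega$ is one as well, and a reindexing of the sum gives $f^* \omega = \omega$; in particular $f$ fixes the class $[\omega]$.

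For the converse I would argue as follows. Suppose $f^*[\omega] = [\omega]$ for some K\"ahler class, and let $\omega$ now denote the unique Ricci-flat K\"ahler metric in that class provided by Yau's theorem. From the local formula $2\pi\Ric\omega = -i\partial\bar\partial\log\det\omega_{j\bar k}$ one sees that the Ricci form is natural under biholomorphisms, so $\Ric(f^*\omega) = f^*(\Ric\omega) = 0$; thus $f^*\omega$ is again a Ricci-flat K\"ahler metric, and it represents $f^*[\omega] = [\omega]$. Uniqueness in Yau's theorem then forces $f^*\omega = \omega$. Since a holomorphic self-map that preserves a K\"ahler form preserves the associated Riemannian metric, $f$ is a holomorphic isometry of $(\fiber, \omega)$.

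To finish I would show that the holomorphic isometries of $(\fiber,\omega)$ form a finite group. By the theorem of Myers and Steenrod the isometry group of the compact Riemannian manifold $\fiber$ is a compact Lie group, and the holomorphic isometries form a closed --- hence compact --- subgroup $K$ containing $f$, whose Lie algebra lies inside $H^0(\fiber, T_\fiber)$. It therefore suffices to see that $\fiber$ admits no nonzero holomorphic vector field, and this is where both hypotheses are used: on a compact Ricci-flat K\"ahler manifold the Bochner technique makes every holomorphic vector field parallel, the metric dual of a parallel field is a harmonic $1$-form, and $b_1(\fiber)=0$ because $\fiber$ is simply connected, so the field vanishes. (Alternatively one can stay within Hodge theory: triviality of the canonical bundle gives $T_\fiber \cong \Omega^{n-1}_\fiber$, and a short chain of conjugation and Serre-duality identities reduces $h^{n-1,0}(\fiber)$ to $h^{1,0}(\fiber) = \tfrac12 b_1(\fiber) = 0$.) Hence $K$ is a $0$-dimensional compact Lie group, i.e.\ finite, and $f$ has finite order.

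The substantive implication is the converse, and its heart --- which is precisely the folklore statement mentioned in the introduction --- is the passage from the purely cohomological hypothesis $f^*[\omega]=[\omega]$ to the rigid conclusion that $f$ is an isometry of a distinguished metric; this is exactly what the uniqueness half of Yau's theorem delivers, and the remainder is the standard interaction of compactness of isometry groups with Bochner vanishing. I would emphasize that simple connectivity is genuinely indispensable rather than a technicality: on a complex torus every translation acts trivially on cohomology, hence fixes every K\"ahler class, yet typically has infinite order.
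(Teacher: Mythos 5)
Your proof is correct and follows the paper's strategy in its two main steps: the averaging argument for sufficiency, and the use of the uniqueness half of Yau's theorem to upgrade the cohomological hypothesis $f^*[\omega]=[\omega]$ to the identity $f^*\omega=\omega$ for the Ricci-flat representative, placing $f$ in the isometry group. The only genuine divergence is in how finiteness is then extracted: the paper closes by citing a purely Riemannian black box (Kobayashi's result that the isometry group of a compact simply connected manifold with non-positive Ricci curvature is finite), whereas you restrict to the closed subgroup of \emph{holomorphic} isometries and prove its finiteness from scratch via Myers--Steenrod together with the Bochner vanishing of holomorphic vector fields (or, equivalently, the Hodge-theoretic computation $h^0(T_\fiber)=h^{n-1,0}=h^{1,0}=\tfrac12 b_1=0$). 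Your route is more self-contained and makes visible exactly where simple connectivity and Ricci-flatness enter; the paper's citation is slightly stronger in that it gives finiteness of the full isometry group, not just the holomorphic one, but for the proposition at hand the two are interchangeable.
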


\begin{proof}
  The condition is clearly sufficient, since if the degree of $f$
is $d$ then the K\"ahler class $\smash{[\omega] + f^*[\omega] + \dots +
(f^*)^{d-1} [\omega]}$ is invariant under $f$.

Suppose now that $f$ fixes a K\"ahler class $[\omega]$ and let
$\omega$ be the unique Ricci flat metric in this class. Then
$f^*\omega$ is again Ricci flat, and thus equal to $\omega$
by unicity. Thus $f$ is an element of the isometry group of
$(\fiber,\omega)$. A general result of Riemannian geometry 
\cite[Corollary~6.2]{MR2243012} now says that the isometry group of a 
simply connected manifold with non positive Ricci curvature is finite.
\end{proof}

  The condition that $\fiber$ be simply connected serves to exclude complex
tori, for tori admit nonzero holomorphic vector fields. 
These fields generate automorphisms homotopic to the identity,
which thus act trivially on the cohomology of the torus, despite
usually being of infinite order.

This result points the way to a construction of non-K\"ahler
manifolds: Let $\fiber$ be a compact simply connected K\"ahler manifold with
trivial canonical bundle. Suppose $\fiber$ admits
an automorphism $f$ of infinite order. Let $V$ be a complex vector
space of dimension $p$ and let $\Gamma$ be a lattice in $V$, we
denote by $B = V / \Gamma$ the complex torus defined by $\Gamma$.
We define a representation $\Gamma \longrightarrow \Aut \fiber$
by mapping every generator of $\Gamma$ to the automorphism $f$. The
lattice $\Gamma$ then acts on the product $\fiber \times V$ by
$$\gamma \cdot (z,t) = (\gamma(z), t + \gamma).$$
We set $X := X(\fiber,B) =  (\fiber \times V) / \Gamma$.

\begin{prop}
  The complex space $X$ is a smooth compact non K\"ahler manifold. It
  is the total space of a holomorphic fibration $\pi : X \to B$, 
whose fibers are all isomorphic to $\fiber$.
\end{prop}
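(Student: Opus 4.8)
The plan is to establish the three assertions in turn, with essentially all of the work concentrated in the non-K\"ahlerness.

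First I would check that $\Gamma$ acts on $\fiber \times V$ freely and properly discontinuously. Both properties are inherited from the translation action of the lattice $\Gamma$ on $V$ through the $\Gamma$-equivariant projection $\fiber \times V \to V$: freeness because $t + \gamma = t$ forces $\gamma = 0$, and proper discontinuity because lattices act properly discontinuously on $V$ while $\fiber$ is compact. Since $\Gamma$ acts by biholomorphisms, it follows at once that $X$ is a complex manifold and that the quotient map $q : \fiber \times V \to X$ is a holomorphic covering.

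For the fibration, I would observe that the projection $\fiber \times V \to V$ composed with $V \to B = V/\Gamma$ is $\Gamma$-invariant, hence descends to a holomorphic map $\pi : X \to B$, $[z,t] \mapsto [t]$. To see that $\pi$ is a locally trivial fiber bundle with fiber $\fiber$, take a small ball $U \subset V$ mapping biholomorphically onto an open set $\bar U \subset B$; then the preimage in $\fiber \times V$ of $\pi^{-1}(\bar U)$ is the disjoint union over $\gamma \in \Gamma$ of the pieces $\fiber \times (U + \gamma)$, which $\Gamma$ permutes simply transitively, so $q$ restricts to a biholomorphism $\fiber \times U \to \pi^{-1}(\bar U)$ commuting with the projections to $\bar U$. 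Local triviality together with compactness of $\fiber$ and $B$ then shows $X$ is compact, and exhibits every fiber of $\pi$ as a copy of $\fiber$.

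The heart of the matter — and the one step I expect to require care — is that $X$ is not K\"ahler. Suppose it carried a K\"ahler form $\Omega$. Pulling back along the local biholomorphism $q$ gives a $\Gamma$-invariant K\"ahler form $q^*\Omega$ on $\fiber \times V$; restricting it to the slice $\fiber \times \{t\}$ produces a K\"ahler form $\omega_t$ on $\fiber$. The key observation is that the class $[\omega_t] \in H^2(\fiber,\R)$ is independent of $t$: the slice inclusions $j_t : \fiber \hookrightarrow \fiber \times V$ are all homotopic because $V$ is contractible, so $[\omega_t] = j_t^*[q^*\Omega]$ does not move. Call this common K\"ahler class $\alpha$. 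Now take a generator $\gamma$ of $\Gamma$, which by construction maps to $f \in \Aut\fiber$; since $\gamma$ acts by $(z,t) \mapsto (f(z),t+\gamma)$, restricting the identity $\gamma^*q^*\Omega = q^*\Omega$ along $j_t$ yields $f^*\omega_{t+\gamma} = \omega_t$, and therefore $f^*\alpha = \alpha$ in cohomology. Thus $f$ would fix a K\"ahler class on $\fiber$, so by the preceding Proposition $f$ would have finite order, contradicting the hypothesis that $\fiber$ admits an automorphism of infinite order. The only genuinely delicate point is the constancy of $[\omega_t]$ over the contractible base $V$; once that is secured, the lattice relations force the $f$-invariant K\"ahler class and the contradiction follows.
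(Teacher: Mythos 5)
Your proposal is correct and follows essentially the same route as the paper: free and properly discontinuous action for smoothness and compactness, descent of the projection for the fibration, and the observation that a K\"ahler class on $X$ would restrict to an $f^*$-invariant K\"ahler class on the fiber, contradicting the previous proposition since $f$ has infinite order. Your slice-homotopy argument is simply an explicit unwinding of what the paper phrases as invariance under the monodromy action, so no substantive difference.
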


\begin{proof}
  The lattice $\Gamma$ clearly acts without fixed points on 
$\fiber \times V$. Its action is also properly discontinuous, since any compact
set in $\fiber \times V$ may be translated as far to infinity in $V$
as desired. The quotient $X$ is thus a smooth complex manifold, and
compact for the same reason that the torus $V / \Gamma$ is compact.

The projection map $pr : \fiber \times V \longrightarrow V$ is invariant
by the action of $\Gamma$ and thus defines a holomorphic morphism
$\pi : X \to B$. It is proper as the manifold $X$ is
compact, and a submersion because the projection morphism is a
submersion. Let $t$ be a point of $B$. The preimage $\pi^{-1}(t)$
may be identified with the product $\fiber \times \Gamma + t$. If we pick
an element $\gamma$ in the lattice $\Gamma$, then the restriction
of the quotient map $q : \fiber \times V \to X$ identifies with the 
automorphism $\gamma \cdot f : \fiber \to \fiber$ and defines an 
isomorphism $\fiber \to X_t$.

Finally, suppose that $X$ were K\"ahler. If $\omega$ were a K\"ahler
metric on $X$, then by restriction we would obtain a K\"ahler class
$[\omega_0]$ on the fiber $\fiber_0$ that would be invariant under the action of the monodromy on the cohomology of $\fiber_0$. But the monodromy group is the same as the group generated by $f$, so this is impossible since $f$
is of infinite order.
\end{proof}

\begin{rema}
  It seems hard to extract precise topological information
about $X$, aside from that which follows trivially from general facts about 
fibrations. For example, the naive road to the Betti numbers of $X$
passes through the space of closed forms on $\fiber \times V$ that
are invariant under the automorphism $f$. Since $f$ is quite wild
I have no idea how one could calculate this in practice.
\end{rema}

The canonical bundle of $\fiber$ is trivial, so there is a 
nowhere zero holomorphic $(n,0)$-form $\sigma$ on $\fiber$. 
As $f^*\sigma$ is again
a $(n,0)$-form on $\fiber$, we must have $f^* \sigma = \lambda \sigma$ for some
complex number $\lambda$. Note that the $(n,n)$-form $i^{n^2} \sigma \wedge
\overline \sigma$ is real and positive on $\fiber$, and that $f^*(\sigma \wedge
\overline \sigma) = |\lambda|^2 \sigma \wedge \overline \sigma$.
Integrating over $\fiber$, we find $|\lambda| = 1$.

\begin{prop}
  The Kodaira dimension of $X$ is zero if $\lambda$ is a root of 
unity and negative otherwise.
\end{prop}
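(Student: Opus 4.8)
The plan is to compute the pluricanonical bundles of $X$ explicitly as flat line bundles and then read off their spaces of sections. Since $\fiber$ is simply connected and $V$ is contractible, the quotient map $q \colon \fiber \times V \to X$ is the universal cover, with deck group $\Gamma$, so $H^0(X, mK_X) = H^0(\fiber \times V, mK_{\fiber \times V})^\Gamma$ for every $m \geq 1$. Let $t_1, \dots, t_p$ be linear coordinates on $V$ and set $\Sigma := \sigma \wedge dt_1 \wedge \cdots \wedge dt_p$, a nowhere vanishing holomorphic $(n+p,0)$-form trivialising $K_{\fiber \times V}$. For $\gamma \in \Gamma$ write $\chi(\gamma) \in \Z$ for the sum of the coordinates of $\gamma$ in a fixed basis of generators of $\Gamma$; by construction $\gamma$ acts on the $\fiber$ factor as $f^{\chi(\gamma)}$ and by translation on $V$, so $\gamma^* \Sigma = \lambda^{\chi(\gamma)} \Sigma$. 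Hence $K_X$ is the flat line bundle attached to the unitary character $\gamma \mapsto \lambda^{\chi(\gamma)}$ of $\Gamma = \pi_1(X)$, and more generally $mK_X$ corresponds to the character $\gamma \mapsto \lambda^{m \chi(\gamma)}$.

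Next I would identify the pluricanonical sections. Writing a section of $mK_X$ on the universal cover as $h \cdot \Sigma^{\otimes m}$ with $h$ holomorphic on $\fiber \times V$, the $\Gamma$-invariance of the section translates into $h(f^{\chi(\gamma)}(z), t + \gamma) = \lambda^{-m\chi(\gamma)} h(z,t)$ for all $\gamma \in \Gamma$. Because $\fiber$ is compact and connected, $h$ is constant on each slice $\fiber \times \{t\}$, so $h \in \O(V)$. Since $|\lambda| = 1$, the relation gives $|h(t+\gamma)| = |h(t)|$, so $|h|$ descends to a continuous, hence bounded, function on the compact torus $B$; by Liouville's theorem $h$ is constant. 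Feeding this back into the transformation rule and using that $\chi \colon \Gamma \to \Z$ is surjective, we obtain $h\,(1 - \lambda^{-m}) = 0$. Therefore $H^0(X, mK_X) = \C \cdot \Sigma^{\otimes m}$ if $\lambda^m = 1$, and $H^0(X, mK_X) = 0$ otherwise.

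The conclusion is then immediate. If $\lambda$ is a root of unity, say $\lambda^N = 1$, then $H^0(X, mNK_X)$ is one-dimensional for every $m \geq 1$, while every plurigenus is at most $1$; hence $\kappa(X) = 0$. If $\lambda$ is not a root of unity, then $\lambda^m \neq 1$ for all $m \geq 1$, so every plurigenus vanishes and $\kappa(X) = -\infty$, which is negative.

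I expect the only delicate point to be the bookkeeping that turns the defining action of $\Gamma$ into the character $\gamma \mapsto \lambda^{\chi(\gamma)}$, and in particular checking that the homomorphism $\chi$ is surjective — this surjectivity is exactly what makes the dichotomy sharp — together with the observation that the $\fiber$-direction contributes nothing because $\fiber$ is compact. Once those are in place, the computation of sections is a single application of Liouville's theorem on $V$.
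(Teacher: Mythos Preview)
Your argument is correct and is essentially the paper's own proof: pull sections of $mK_X$ back to $\fiber\times V$, use compactness of $\fiber$ to reduce the coefficient function to a holomorphic function on $V$, use $|\lambda|=1$ to get $\Gamma$-periodicity of its modulus and hence constancy, and then read off $\lambda^m=1$. Your $\chi$ is exactly the paper's $\deg$, and your appeal to Liouville is the paper's maximum-on-the-fundamental-parallelogram step; the only cosmetic addition is your packaging of $K_X$ as the flat bundle attached to the unitary character $\gamma\mapsto\lambda^{\chi(\gamma)}$.
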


\begin{proof}
  Suppose $\alpha$ is a global section of $m K_X$ for some $m \geq 1$.
If $q : \fiber \times V \longrightarrow X$ is the quotient map, then $q^*\alpha$
is a global section of $m K_{\fiber \times V}$. We may thus write
\begin{align*}
  q^*\alpha = \theta(z,v) \,
  \bigl(
  \sigma_\fiber \otimes \sigma_V
  \bigr)^{\otimes m},
\end{align*}
where $\sigma_\fiber$ is a trivializing section of $K_\fiber$, 
$\sigma_V = d v_1 \wedge \ldots \wedge d v_n$ is the standard holomorphic 
volume form on $V$, and $\theta$ is a holomorphic function on $\fiber \times V$.
We note that since $\fiber$ is compact, $\theta$ is actually just
a holomorphic function on $V$.

Since $\alpha$ is a section of $m K_X$, the pullback $q^*\alpha$ must be
invariant under the action of $\Gamma$ on $\fiber \times V$.
The holomorphic volume form $\sigma_V$ is invariant under the action 
of $\Gamma$,
so if $\gamma_i$ is one of the generators of $\Gamma$ we find 
\begin{align*}
  \theta(v) \,
  \bigl(
  \sigma_\fiber \otimes \sigma_V
  \bigr)^{\otimes m}
  = q^*\alpha
  = \gamma_i \cdot q^*\alpha
  = \lambda^m \theta(v + \gamma_i)
  \bigl(
  \sigma_\fiber \otimes \sigma_V
  \bigr)^{\otimes m}.
\end{align*}
If $\gamma = \sum_i a_i \gamma_i$ is an element of $\Gamma$, we set
$\mathop{\text{deg}}\gamma := \sum_i a_i$. Using the above we then get
$\theta(v) = \lambda^{m\mathop{\text{deg}}\gamma} \theta(v + \gamma)$ for
any $\gamma$ and $v$. This entails that $|\theta(v)| = |\theta(v + \gamma)|$
for all $v$ and $\gamma$, but then $|\theta|$ takes its maximum on $V$
in the fundamental paralleogram of $\Gamma$, so $\theta$ is constant.
The complex number $\lambda$ must then satisfy $\lambda^m = 1$.

We thus see that if $\lambda$ is an $m^{th}$ root of unity, then every
$m^{th}$ power of $K_F$ admits a unique non-zero holomorphic section, so
 the Kodaira dimension of $X$ is zero. Likewise, if $\lambda$
is not a root of unity, then no power of $K_\fiber$ admits a global section,
so the Kodaira dimension of $X$ is negative.
\end{proof}

\begin{prop}
  The numerical dimension of $K_X$ is zero.
\end{prop}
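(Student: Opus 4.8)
The plan is to produce a smooth Hermitian metric on $K_X$ with vanishing Chern curvature. Then $c_1(K_X) = 0$ in $H^2(X,\R)$, and this forces the numerical dimension to be zero: for a nef line bundle $L$ — and $K_X$ will be nef, carrying a metric of semipositive (indeed zero) curvature — the numerical dimension equals $\max\{\,k : c_1(L)^k \neq 0 \text{ in } H^{2k}(X,\R)\,\}$, the variants defined for general pseudo-effective classes agreeing with this once $c_1(L)$ is represented by the closed semipositive form $0$. So it is enough to flatten $K_X$.

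To build the metric, recall from the discussion preceding the previous proposition that $f^*\sigma_\fiber = \lambda\sigma_\fiber$ with $|\lambda| = 1$, and that translations fix $\sigma_V$; hence a generator $\gamma_i$ of $\Gamma$ sends the trivializing section $\sigma := \sigma_\fiber \otimes \sigma_V$ of $K_{\fiber\times V}$ to the constant multiple $\lambda\sigma$. Consequently the $(n+p,n+p)$-form $i^{(n+p)^2}\sigma\wedge\bar\sigma$ on $\fiber\times V$ is $\Gamma$-invariant (a generator acts on it by $|\lambda|^2 = 1$) and descends to a smooth, nowhere-vanishing volume form $\mu$ on $X$, that is, to a nowhere-zero smooth section of $K_X \otimes \overline{K_X}$; equivalently, $\mu$ determines a smooth Hermitian metric $h$ on $K_X$. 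On any small open set $U \subset X$ the frame $\sigma$ descends (it transforms under $\Gamma$ only by the unimodular constant $\lambda$), and there $\mu$ is a fixed nonzero constant multiple of $\sigma \otimes \bar\sigma$, so $\|\sigma\|^2_h$ is constant on $U$ and $\Theta_h(K_X) = -i\,\partial\bar\partial\log\|\sigma\|^2_h = 0$. Since $U$ was arbitrary, $h$ is flat, so $c_1(K_X) = \bigl[\tfrac{1}{2\pi}\Theta_h(K_X)\bigr] = 0$ in $H^2(X,\R)$, and the numerical dimension of $K_X$ is zero by the first paragraph.

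There is no genuine obstacle here; the only point needing care is that the flat structure on $K_{\fiber\times V}$ is really $\Gamma$-equivariant, and the sole input from the geometry of $\fiber$ is the identity $|\lambda| = 1$. Combined with the previous proposition, this yields compact complex manifolds $X$ whose canonical bundle has numerical dimension $0$ but Kodaira dimension $-\infty$ (whenever $\lambda$ is not a root of unity), contradicting the abundance conjecture; and, viewed through the fibration $\pi : X \to B$ over the torus $B$ — whose base and general fiber both have Kodaira dimension $0$ — it likewise contradicts the Iitaka conjecture.
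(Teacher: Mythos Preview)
Your argument is correct, and it reaches the same conclusion as the paper --- a flat hermitian metric on $K_X$, hence $c_1(K_X)=0$ --- but by a noticeably more elementary route. The paper goes through the relative canonical bundle: from the short exact sequence of tangent bundles and triviality of $\pi^*T_B$ it identifies $K_X = K_{X/\fiber}$, pulls back to $p_\fiber^*K_\fiber$ on $\fiber\times V$, and then equips this with the metric determined by the volume form $dV=\omega^n/n!$ of a \emph{Ricci-flat} K\"ahler metric $\omega$ (invoking Yau); flatness follows because the curvature is the Ricci form, and $\Gamma$-invariance is obtained separately from $|\lambda|=1$ together with a volume computation showing $f^*dV=dV$. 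You instead work directly with the $\Gamma$-equivariant holomorphic frame $\sigma=\sigma_\fiber\otimes\sigma_V$ of $K_{\fiber\times V}$ and observe that $i^{(n+p)^2}\sigma\wedge\bar\sigma$ descends simply because $|\lambda|=1$; no appeal to Yau's theorem or to Ricci-flat metrics is needed. Your approach is shorter and isolates the single necessary input ($|\lambda|=1$) cleanly; the paper's approach has the mild expository advantage of tying the construction back to the Ricci-flat geometry already used in the first proposition, but this is not logically required here.
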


\begin{proof}
  We will show that the canonical bundle $K_X$ admits a flat hermitian
metric. Its first Chern class is thus zero, which implies the proposition.

  Since $\fiber \to X \to B$ is a fibration there is a short exact sequence
  \begin{align*}
    0 \longrightarrow T_{X/\fiber} 
    \longrightarrow T_X
    \longrightarrow \pi^* T_B
    \longrightarrow 0
  \end{align*}
of tangent bundles over $X$. Note that since $B$ is a torus the bundle
$\pi^*T_B$ is trivial. The adjunction formula now says that the canonical
bundle of $X$ is $K_X = K_{X/\fiber}$. Let $q : \fiber \times V \to X$ be 
the quotient morphism and consider the pullback bundle 
$q^*K_{X/\fiber} = p_\fiber^*K_\fiber$, where 
$p_\fiber : \fiber \times V \to \fiber$ is the projection.

Now pick a Ricci-flat K\"ahler metric $\omega$ on $\fiber$, and let
$\dV = \omega^n/n!$ be its volume form. Recall that the volume form
of any other Ricci-flat K\"ahler metric is a constant multiple of $dV$.
The form $dV$ defines a smooth hermitian metric on $p^*_\fiber K_\fiber$ 
by the formula 
$h(\alpha, \overline \beta) \, dV = i^{n^2} \alpha \wedge \overline \beta$,
where $\alpha$ and $\beta$ are sections of $p^*_\fiber K_\fiber$.
The curvature form of this metric is the Ricci-form of $\omega$, so it
is flat.

If $\sigma_\fiber$ is a trivializing holomorphic volume form on $\fiber$,
then $f^*\sigma_\fiber = \lambda \sigma_\fiber$, where $\lambda$
is a complex number with absolute value 1. Also note that $f^*\omega$ is
again a Ricci-flat K\"ahler metric on $\fiber$, and that 
\begin{align*}
  \Vol(\fiber,f^*\omega) = \int_\fiber \frac{f^*\omega^n}{n!}
  = \int_\fiber \frac{\omega^n}{n!} = \Vol(\fiber,\omega)
\end{align*}
because $f : \fiber \to \fiber$ is a surjective finite morphism of degree one. 
Thus $f^*\dV = \dV$. From these two facts it follows that
\begin{align*}
  f^*
  \bigl(
  h(\alpha, \overline{\beta})
  \bigr) \dV
  = f^*
  \bigl(
  h(\alpha, \overline{\beta}) \dV
  \bigr)
  = i^{n^2} f^*\alpha \wedge \overline{f^*\beta}
  = h(f^*\alpha, \overline{f^*\beta}) \dV,
\end{align*}
so the metric $h$ is invariant under the action of $\Gamma$ and thus defines
a flat hermitian metric on $K_{X/\fiber} = K_X$.
\end{proof}

\section{Automorphisms of hyperk\"ahler manifolds}

As before we let $\fiber$ be a compact simply connected K\"ahler manifold 
with trivial canonical bundle.
The automorphism group of $\fiber$ admits a natural representation
$$\Aut \fiber \longrightarrow \Aut H^2(\fiber,\C),$$
obtained by sending each automorphism to the pullback morphism on
cohomology. If $\fiber$ is a K3 surface, then the global Torelli theorem
entails that this group morphism is actually injective. The order
of an automorphism $f$ is thus equal to the order of its pullback
$f^*$ on degree two cohomology.

One may obtain examples of higher dimensional holomorphic symplectic
manifolds from a K3 surface, see \cite{MR730926}. The idea is to
consider the symmetric product $\fiber^n / \mathfrak S_n$. This space
is singular, but the Douady space $\smash{\fiber^{[n]}}$ of subspaces of $\fiber$
of length $n$ is a desingularization of the symmetric product. The
Douady space is then a holomorphic symplectic manifold of dimension
$2n$.

The second cohomology of the Douady space is isomorphic to
\begin{align*}
  H^2(\fiber^{[n]},\C) =
  H^2(\fiber,\C) \oplus \C \cdot E,
\end{align*}
where $E$ is an exceptional divisor of the desingularization $\smash{\fiber^{[n]}
\to \fiber^n / \mathfrak S_n}$. Any automorphism $f$ of the K3 surface $\fiber$
induces an automorphism of the Douady space $\smash{\fiber^{[n]}}$. This new
automorphism acts like $f$ on the part of the second cohomology
coming from $\fiber$, and trivially on the exceptional divisor. In
particular, if $f$ is of infinite order on $\fiber$, then the induced
automorphism on $\smash{\fiber^{[n]}}$ is of infinite order.

Recall that the holomorphic symplectic form $\sigma$ on $\fiber$ is unique
up to scalars. It follows that $\sigma$ is an eigenvector of any
automorphism $f$ of $\fiber$, and as before one sees that the
eigenvalue of $\sigma$ must have absolute value 1. Oguiso gives much
more precise results in \cite{MR2406267}; for the moment we will
contend ourselves with the following special case of his Theorem 2.4:

\begin{prop}
  Let $\fiber$ is a projective K3 surface and $f$ an automorphism of
$\fiber$. Let $\lambda$ be the eigenvalue of $f^*$ on the space 
$H^0(\fiber,K_\fiber)$. Then $\lambda$ is a root of unity.
\end{prop}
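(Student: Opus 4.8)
The plan is to reduce the statement to a question about the action of $f^*$ on the integral cohomology lattice $H^2(\fiber,\Z)$ and then to invoke a standard finiteness principle for integral matrices. First I would recall that for a K3 surface the holomorphic symplectic form $\sigma$ spans $H^{2,0}(\fiber) = H^0(\fiber, K_\fiber)$, so the eigenvalue $\lambda$ in the statement is exactly the eigenvalue of the $\C$-linear extension of $f^*$ acting on the one-dimensional subspace $H^{2,0}(\fiber) \subset H^2(\fiber,\C)$. Thus $\lambda$ occurs among the eigenvalues of $f^*$ acting on all of $H^2(\fiber,\C)$.

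Next I would use that $f^*$ preserves the integral structure: it is an automorphism of the lattice $H^2(\fiber,\Z) \cong \Z^{22}$, hence is represented, in a basis of that lattice, by a matrix $A \in \mathrm{GL}_{22}(\Z)$. Its characteristic polynomial $\chi_A(t)$ therefore lies in $\Z[t]$, is monic, and has constant term $\pm 1$. The eigenvalue $\lambda$ is a root of this polynomial, so $\lambda$ is an algebraic integer all of whose conjugates are likewise eigenvalues of $A$. The key point established earlier in the excerpt, together with the fact that $f^*$ also preserves the Hodge decomposition and acts as an isometry of the Beauville--Bogomolov form (here just the cup product form on a K3), is that $\lambda$ and all of its Galois conjugates have absolute value $1$: indeed each conjugate is again the symplectic-form eigenvalue of some conjugate Hodge structure, or more directly one argues that $f^*$ preserves a positive-definite form on a suitable real subspace so its eigenvalues on the relevant pieces lie on the unit circle.

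Then I would apply Kronecker's theorem: an algebraic integer all of whose conjugates have absolute value $1$ is a root of unity. This gives the conclusion. The main obstacle I anticipate is the middle step — showing cleanly that \emph{every} Galois conjugate of $\lambda$, not merely $\lambda$ itself, has absolute value $1$. The absolute value of $\lambda$ being $1$ was already obtained by integrating $\sigma \wedge \overline\sigma$, but that argument is analytic and applies only to $\lambda$. To control the conjugates one must use that the conjugates of $\lambda$ are again among the eigenvalues of the integral matrix $A$, and that $A$, being an isometry of a lattice whose signature is $(3,19)$, has the property that its eigenvalues on the transcendental part all have modulus $1$; alternatively one can cite the relevant part of Oguiso's Theorem 2.4 directly, which is the route the paper presumably takes. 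I would present the Kronecker-theorem argument as the backbone and fill the modulus-one claim for the conjugates either by this lattice-signature observation or by appeal to Oguiso's result.
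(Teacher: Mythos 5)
The paper does not actually prove this proposition: it is quoted verbatim as a special case of Oguiso's Theorem~2.4 and the reader is referred to \cite{MR2406267}. So any complete argument you give is already more than the paper supplies, and your general strategy (integrality of $f^*$ on $H^2(\fiber,\Z)$ plus Kronecker's theorem) is a standard and viable route. However, the step you yourself flag as the main obstacle is where your proposed fix fails. You suggest that because $f^*$ is an isometry of the lattice $H^2(\fiber,\Z)$ of signature $(3,19)$, its eigenvalues ``on the relevant pieces'' have modulus $1$. This is false for the full lattice: isometries of an indefinite lattice can and do have eigenvalues off the unit circle, and indeed for a K3 automorphism of positive topological entropy the spectral radius of $f^*$ on $H^2$ is a Salem number strictly greater than $1$. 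So the signature of $H^2$ alone gives you nothing, and Kronecker's theorem cannot be applied to the characteristic polynomial of $f^*$ on all of $H^2(\fiber,\Z)$, since the Galois conjugates of $\lambda$ inside that polynomial are not a priori the ``unitary'' eigenvalues.

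The correct version of your middle step must use projectivity and must restrict to the transcendental lattice $T(\fiber) = \mathrm{NS}(\fiber)^{\perp}$, which is preserved by $f^*$. For a \emph{projective} K3 surface, $\mathrm{NS}(\fiber)$ has signature $(1,\rho-1)$, so $T(\fiber)$ has signature $(2,20-\rho)$, and the positive-definite $2$-plane in $T(\fiber)\otimes\R$ is exactly the one spanned by $\mathrm{Re}\,\sigma$ and $\mathrm{Im}\,\sigma$; since $f^*\sigma = \lambda\sigma$, this plane and its negative-definite orthogonal complement in $T(\fiber)\otimes\R$ are both $f^*$-invariant. Hence $f^*|_{T(\fiber)\otimes\R}$ lies in a conjugate of the compact group $O(2)\times O(20-\rho)$, while also lying in the discrete group $O(T(\fiber))$; it is therefore of finite order, and $\lambda$, being an eigenvalue of a finite-order map, is a root of unity. (At that point Kronecker is not even needed, though one can also run your argument on the characteristic polynomial of $f^*|_{T(\fiber)}$, all of whose roots now have modulus $1$.) Note that projectivity is essential: the statement is false for general K3 surfaces, which is precisely the content of Oguiso's Examples~2.5 and~2.6 that the paper exploits two paragraphs later.
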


By our discussion of Douady spaces, the same is true of the
holomorphic symplectic space constructed from a projective K3
surface.

\begin{exam}
  Let $\P := \P^1_1 \times \P^1_2 \times \P^1_3$. This space comes
  equipped with three projections $p_i : \P \to \P_i$. Let
$$
L := 
p_1^* \, \cc O(1)_{\P^1_1} \otimes
p_2^* \, \cc O(1)_{\P^1_2} \otimes
  p_3^* \, \cc O(1)_{\P^1_3}
$$
be an ample line bundle on $\P$, so that $K_\P = -2L$. The adjunction
formula shows that if $\tau$ is a general section of $2L$, then
the zero variety $X = \tau^{-1}(0)$ is a smooth K3 surface.

We can now consider the projections $p_{jk} : \P \to \P_j \times
\P_k$. Restricted to the K3 surface $\fiber$, these define ramified
coverings $\fiber \to \P_j \times \P_k$ of degree 2. The Galois groups
of these coverings give three holomorphic involution $\iota_i$
of $\fiber$, and we have
\begin{align*}
  \Aut X =
  \langle \iota_1, \iota_2, \iota_3 \rangle
  \simeq \Z_2 * \Z_2 * \Z_2,
\end{align*}
where $\Z_2 := \Z / 2\Z$.
Both identities in the above formula are nontrivial, but they are
proved in \cite{Oguiso:birat}.
The automorphism group of $\fiber$ thus contains several elements of
infinite order.
\end{exam}

\begin{exam}
  We again refer to Oguiso's paper \cite[Examples~2.5 and~2.6]{MR2406267}, 
from which
one may extract that there exists a K3 surface $\fiber$ which admits
an automorphism $f$ such that the eigenvalue of $f^*$ on $H^0(\fiber,K_\fiber)$
has infinite order. As before, it follows that there exist higher 
dimensional hyperk\"ahler manifolds with the same property.

We now consider the non K\"ahler manifold $X = X(\fiber,B)$. This
manifold has negative Kodaira dimension by our earlier results.
By construction there is a holomorphic map $\pi : X \longrightarrow B$
whose fiber at every point is $\fiber$. Both $\fiber$ and $B$ have Kodaira
dimension zero, so
\begin{align*}
  \kappa(X) < \kappa(\fiber) + \kappa(B).
\end{align*}
The manifold $X$ is thus shows that the Iitaka $C_{n,m}$ conjecture is false
for general compact complex manifolds. Since $\kappa(X)$ is
negative but the canonical bundle $K_X$ has numerical dimension zero,
the manifold also shows that the generalized abundance conjecture
is false for general complex manifolds.
\end{exam}

\begin{exam}
  Oguiso and Schr\"oer show in \cite{Oguiso:enriques} that the
  universal cover of the Douady space $\fiber^{[n]}$ of an Enriques
  surface $\fiber$ is a Calabi--Yau manifold. They also show that there
  exists an Enriques surface $\fiber$ with $\Aut \fiber = \Z_2 * \Z_2 * \Z_2$,
  similarly to the hyperk\"ahler manifolds considered above. The
  fibration $X(\fiber^{[n]},B)$ then provides an example of a non-K\"ahler
  manifold $X \to B$ with a Calabi--Yau fiber.
\end{exam}

\bibliographystyle{alpha}
\bibliography{abundance}

\end{document}